\DeclarePairedDelimiter{\evdel}{\langle}{\rangle}
\newcommand{\ev}{\operatorname{E}\evdel}
\title{The discrete renewal theorem with bounded interevent times}
\author{Rohan Shenoy}
\date{November 2021}
\begin{document}

\maketitle

The purpose of this note is to prove the celebrated Discrete Renewal Theorem in a common special case, using only very elementary methods from real analysis, rather than markov chain theory, complex analysis, or generating functions.

\section{Probabilistic Sequence}
To introduce the problem, consider a class of board games in which a player's counter makes a sequence of moves in a fixed direction along a line of squares $S_n,n\geq0$. The counter starts from $S_0$, with the sizes of successive moves determined by the roll of a die (or multiple dice), which may be biased. 

For the $n^{th}$ square $S_n$, it is natural to ask for the probability that the counter ever lands on $S_n$, denoted $u_n$. This is especially valuable where $S_n$ is a square on which the player gains some reward or pays some penalty. Note that $u_0=1$ and we have allowed the line of squares to be semi-infinite. 

By definition, the length $X$ of any jump of the counter is independent of all other jumps, and we denote its probability distribution by
    $$f_j=P(X=j)\hspace{0.2cm},\hspace{0.2cm}j\geq1$$
with the sum of $f_j$ being $1$. Example: If jumps are given by a fair cubical die, then
    $$f_j=\frac{1}{6}\hspace{0.2cm},\hspace{0.2cm}1\leq j\leq6$$
and we require the following assumption
\newtheorem{Assumption}{Assumption}
\begin{Assumption}\label{asm}
    For some finite $S$, $f_j=0$ for $j>S$.
\end{Assumption}
Under Assumption (\ref{asm}), we shall prove by elementary methods in Theorem 2 below, that
    \begin{equation}\label{ev}
        \lim_{n\to\infty}u_n=\frac{1}{\ev{X}}
    \end{equation}
where $\ev{X}$ is the expected value of any jump $X$. That is to say, for all practical purposes, sufficiently distant points are all equally likely to be visited by the counter with probability $\frac{1}{\ev{X}}$. First we give this preliminary
    \newtheorem*{Thrm1}{Theorem 1}
        \begin{Thrm1}
            For all $n\geq1$
            \begin{equation*}
                u_n = f_nu_0 + f_{n-1}u_1 + ... + f_1u_{n-1},\tag{a}
            \end{equation*}
            and
            \begin{equation*}
                1 = P(X>n) + P(X>n-1)u_1 + ... + P(X>1)u_{n-1} + u_n,\tag{b}
            \end{equation*}
        \end{Thrm1}
    \begin{proof}
        (a) For $1\leq j \leq n$, the counter leaves $S_0$, lands first on $S_{n-j}$, and then subsequently lands on $S_n$ with probability $f_ju_{n-j}$. Summing these probabilities yields our result, using the partition theorem (the law of total probability) \cite{feller1957introduction}.
        
        (b) There are 3 cases to consider
        \begin{itemize}
            \item The counter `visits' $S_n$ with probability $u_n$
            \item For some $j<n$, the counter first visits $S_j$ and then makes a jump greater than $n - j$ with probability $u_jP(X>n-j)$
            \item The counter jumps from $S_0$ directly `over' $S_n$, with probability $P(X>n)$
        \end{itemize} Summing all of these probabilities, the partition theorem \cite{feller1957introduction} is again used to yield our result.
    \end{proof}
We make the following observations
\begin{enumerate}
    \item A positive sequence $u_n$ defined by (a) is called a \textit{renewal sequence} with respect to the distribution $f_n$.
    \item The sum on RHS of (a) is called the \textit{convolution} of the sequences $u_n$ and $f_n$ \cite{cox1983point}.
    \item Under Assumption (\ref{asm}) there are at most S and S+1 terms in the RHSs of (a) and (b) respectively.
\end{enumerate}

\section{Convergence of Sequence}
To justifiably use the limit formula in (\ref{ev}), we need to establish that $u_n$ converges as $n\to\infty$
\newtheorem*{Convergence}{Theorem 2}
\begin{Convergence}
    $u_n$ has a finite limit as $n\to\infty$, this being
    $$\lim_{n\to\infty}u_n=\frac{1}{\ev{X}}$$
    where $\ev{X}$ is the expected value of any jump $X$
\end{Convergence}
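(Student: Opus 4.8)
The plan is to avoid any appeal to characteristic roots or generating functions and instead extract the limit by a compactness argument applied directly to the two recurrences of Theorem 1. First I would rewrite identity (b) in a compact tail-sum form. Writing $r_k = P(X>k)$, note that $r_0 = 1$ (since $X\geq 1$), that $r_k = 0$ for $k\geq S$ by Assumption 1, and that $\ev{X} = \sum_{k=0}^{S-1} r_k < \infty$; identity (b) then reads $1 = \sum_{k=0}^{n} r_k\,u_{n-k}$. Since every summand is nonnegative, this gives at once the uniform bound $0 \leq u_n \leq 1$. The strategy is then to show $\limsup_n u_n = \liminf_n u_n = 1/\ev{X}$, whence the limit exists and has the claimed value.

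The heart of the argument is a subsequence extraction. Let $\lambda = \limsup_n u_n$ and pick $n_i \to \infty$ with $u_{n_i} \to \lambda$. By Bolzano--Weierstrass and a diagonal argument over the countably many shifts, I would pass to a further subsequence along which $u_{n_i - m}$ converges to some $v_m \in [0,\lambda]$ for every fixed $m \geq 0$, with $v_0 = \lambda$ and $v_m \leq \lambda$ because $n_i - m \to \infty$. Applying recurrence (a) in the form $u_{n_i - m} = \sum_{j=1}^{S} f_j\,u_{n_i - m - j}$ and letting $i \to \infty$ yields $v_m = \sum_{j=1}^{S} f_j\,v_{m+j}$. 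Taking $m=0$ gives $\lambda = \sum_j f_j v_j \leq \sum_j f_j \lambda = \lambda$, so equality is forced and $v_j = \lambda$ for every $j$ with $f_j > 0$. Iterating, whenever $v_m = \lambda$ the same identity forces $v_{m+j} = \lambda$ for each such $j$.

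Thus the set $\{m : v_m = \lambda\}$ contains $0$ and is closed under adding any element of the support $D = \{j : f_j > 0\}$; that is, it contains the numerical semigroup generated by $D$. Here I need a purely number-theoretic aperiodicity input: if $\gcd(D) = 1$ --- which holds in the motivating examples, e.g.\ the fair die --- then by the Frobenius (``Chicken McNugget'') fact this semigroup contains every sufficiently large integer, so $v_m = \lambda$ for all $m \geq F$ and some fixed $F$. Feeding this back into the tail-sum identity evaluated at $N = n_i - F$, only $k = 0,\dots,S-1$ contribute, and each $u_{N-k} = u_{n_i - (F+k)} \to v_{F+k} = \lambda$; in the limit $1 = \sum_{k=0}^{S-1} r_k \lambda = \lambda\,\ev{X}$, giving $\lambda = 1/\ev{X}$. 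Running the identical argument with $\liminf$ (where the extracted shift-limits satisfy $w_m \geq \ell$, reversing the inequality but forcing equality the same way) yields $\liminf_n u_n = 1/\ev{X}$ as well, and the theorem follows.

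The main obstacle is \emph{existence} of the limit rather than its value: once existence is known, the value drops out of identity (b) in a single line. The delicate point is propagating the equality $v_m = \lambda$ from the support to a full tail of indices, which is exactly where aperiodicity enters; without $\gcd(D) = 1$ the sequence genuinely oscillates and the bare statement fails, so I would either add this hypothesis explicitly or justify it from the setup. Some care is also needed in the diagonal extraction to guarantee that a single subsequence serves simultaneously for all shifts $m$ appearing in both recurrences.
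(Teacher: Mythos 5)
Your proposal is correct (granted the aperiodicity hypothesis you flag), but it takes a genuinely different route from the paper's. The paper works with sliding-window extrema: for $n>S$ recurrence (a) writes $u_n$ as a convex combination of $(u_{n-S},\dots,u_{n-1})$, so $M_n=\max(u_{n-S},\dots,u_{n-1})$ decreases and $m_n=\min(u_{n-S},\dots,u_{n-1})$ increases; both converge by monotone convergence, a contradiction argument forces their limits to coincide, and the squeeze theorem gives existence of $\lim_n u_n$. The value $1/\ev{X}$ is then extracted exactly as in your last step, by letting $n\to\infty$ in identity (b) and using the tail-sum formula. You replace the monotone-window device with compactness: a diagonal subsequence realizing shift limits $v_m$ along a $\limsup$-attaining sequence, the convexity argument forcing $v_m=\limsup_n u_n$ on the numerical semigroup generated by the support $D=\{j:f_j>0\}$, and the Frobenius fact upgrading this to all sufficiently large $m$ before feeding back into (b). What each buys: the paper's argument is more elementary (no diagonal extraction, no number theory), in keeping with its stated aim; yours is essentially the classical subsequence proof of the Erd\H{o}s--Feller--Pollard theorem specialized to bounded support, and it extends with only minor changes to the case of unbounded interevent times, which the paper's windowing argument does not.

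Your worry about aperiodicity is not a defect of your proof but a genuine gap in the paper's statement: without some such hypothesis Theorem 2 is false (take $f_2=1$, so that $u_n$ alternates $1,0,1,0,\dots$ while $1/\ev{X}=1/2$). The paper's contradiction step silently assumes more than you do, since it chooses $k$ with $kf_j>1$ for all $1\le j\le S$, which requires every $f_j$ with $1\le j\le S$ to be strictly positive --- a condition strictly stronger than your $\gcd(D)=1$. So your explicit hypothesis is both necessary for the result and weaker than what the paper's own argument implicitly uses; stating it, as you propose, is the right repair.
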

\begin{proof}
From Assumption (\ref{asm}), for some finite $S$, $f_j=0$ for $j>S$. For $n>S$, Theorem 1(a) then reduces to
    $$u_n=f_{S}u_{n-S} + f_{S-1}u_{n-S+1} + ... + f_2u_{n-2} + f_1u_{n-1}$$
Letting the maximum of the terms $(u_{n-S},...,u_{n-1})$ be denoted $M_n$, all of these terms are clearly less than or equal to $M_n$. As the sum of $f_j$ is 1, $u_n$ must be less than or equal to $M_n$. Avoiding the trivial case where $u_n$ is constant, it can be proven by induction that at least one of $(u_{n-S},...u_{n-1})$ is not $M_n$ and the inequality becomes strict,
    \begin{equation}\label{<<}
        u_n<M_n
    \end{equation}
and so all terms ($u_{n-S+1},...,u_n$) are all less than or equal to $M_n$ such that
    $$M_{n+1}\leq M_n$$
and hence $M_n$ is a monotonically decreasing sequence. Similarly denoting $m_n$ the minimum of $(u_{n-S},...,u_{n-1})$, with the precise same logic, $m_n$ is a monotonically increasing sequence. Clearly with the context of probability, both sequences are bounded. By the Monotone Convergence Theorem \cite{MCT}, $M_n$ and $m_n$ both converge to finite limits as $n\to\infty$. We wish to prove that these limits are the same.

Denoting the limit of $M_n$ and $m_n$ as $M$ and $m$ respectively, we assume, for the sake of contradiction, that $m\neq M$. There must then exist a $\delta>0$ such that
    $$M-m=k\delta$$
Where $kf_j>1\hspace{0.2cm},\hspace{0.2cm}1\leq j\leq S$. As $m_n$ is monotonically increasing to $m$, for all $n$ it is less than or equal to $m$. We have
    \begin{equation}\label{m_n}
        m_n\leq M-k\delta.
    \end{equation}
As $M_n$ is monotonically decreasing to $M$, there must be some $N$ such that for all $t>N$, 
    \begin{equation}\label{M_n}
        M_t-M<\delta.
    \end{equation}
Now consider equation (a) under Assumption (\ref{asm}). One of $(u_{t-S},...,u_{t-1})$ is $m_t$ and all others are less than or equal to $M_t$. Let $m_t$ here have coefficient $f'$, then as the sum of $f_j$ is 1, 
    $$u_t\leq f' m_t+(1-f')M_t$$
and we can substitute equations (\ref{m_n}) and (\ref{M_n}) into this
    $$u_t\leq M-(kf'+f'-1)\delta$$
With all the above constraints, $(kf'+f'-1)\delta$ is positive such that for all $t>N$, 
    $$u_t<M$$
But then all terms $(u_{t},...,u_{t+S-1})$ are strictly less than $M$. Setting $n=t+S$ we have
    $$M_n<M$$
A clear contradiction of the proven fact that $M_n$ monotonically decreases to $M$. The assumption that the sequences have different limits is false, and so $M_n$ and $m_n$ converge to the same limit. From equation (\ref{<<}),
    $$m_n<u_n<M_n.$$
As $M_n$ and $m_n$ have the same limit, the Squeeze Theorem \cite{fuller1977sequences} applies and $u_n$ must have this same limit $L$.

We may now complete the proof of Theorem 2. We have established that $u_n$ converges to some limit $L$ as $n\to\infty$. Allowing $n\to\infty$ in Theorem 1(b) yields the required result immediately, using the fact that the sum is finite, noting that $X$ has a proper distribution [so $P(X > 0) = 1 $], and using the tail sum formula for $\ev{X}$. That is, for a positive integer-valued X, 
    $$\ev{X} = P(X > 0) + P(X > 1) + P(X > 2) + ... $$
The result follows
    $$\lim_{n\to\infty}u_n=\frac{1}{\ev{X}}$$
\end{proof}
We note, that in contrast to the Erdös-Feller-Pollard theorem \cite{x} (described below) which proves the more general renewal theorem, we have been able to prove this specific result using only very elementary methods.

\section{Background and History}
Theorem 2, as given above, is actually true without Assumption (\ref{asm}), where the limit becomes zero when $\ev{X}$ is infinite. This was shown in a famous paper of 1949 by P. Erdös, W. Feller, and H. Pollard \cite{x}, using the discrete generating function [\textit{dgf}], and proved a key property of power series with positive coefficients to acquire the result. Explicitly, if the sequences $u_n$ and $f_n$ have \textit{dgf}s U(s) and F(s) respectively, then from Theorem 1(a), we have 
    \begin{equation}\label{poln}
        U(s) = \frac{1}{1 - F(s)}, 
    \end{equation}
as used in the Erdös-Feller-Pollard theorem. For a simpler proof of this theorem, a summary of all the standard notation, and its various applications, one can see chapter XIII of W. Feller's book: `An Introduction to Probability Theory and its Applications', Volume I, 3rd edition (1968). One can also find the original proof here \href{https://old.renyi.hu/~p_erdos/1949-01.pdf} (renyi.hu) \cite{x}.

Note that under Assumption (\ref{asm}), $[1 - F(s)]$ is a polynomial, and it follows from (\ref{poln}), and the theory of partial fractions, that
    $$\lim_{n\to\infty}u_n=\frac{1}{E[X]}$$
which is our Theorem 2. However, in the special case of Assumption (\ref{asm}), the elementary proof outlined in sections $1$ and $2$ is sufficient.

Discrete renewal theory is very strongly linked with certain properties of Markov chains. The discrete renewal theorem can be proved using suitable Markov chain convergence theorems, and vice versa \cite{smith1958renewal}.

In more general renewal processes, the lengths of jumps $X$ are allowed to have an arbitrary distribution on the positive real line. In such cases, the Laplace transform may be used in place of the \textit{dgf}, and a suitably modified version of the renewal theorem proved. Even more generally, one may allow counters to jump both forwards and backwards, so that $X$ may also take negative values with a distribution on the entire real line. Here the Fourier transform may be used to acquire appropriate results \cite{resnick1992adventures}.

\begin{center}
    \includegraphics[width=0.71cm]{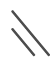}
\end{center}

\section*{Acknowledgements}
I would like to acknowledge the help of an anonymous referee for providing their detailed report. I would also like to acknowledge the support of my supervisor Mr Stuart Andrew for comments on the paper.

\bibliographystyle{unsrt}
\bibliography{Bibliography.bib}

\begin{flushright}
    ROHAN MANOJKUMAR SHENOY\\
    Nottingham High School, Waverley Mount, Nottingham, NG7 4ED
    Shenoy.r.m@nottinghamhigh.co.uk
\end{flushright}

\end{document}